\newenvironment{proof}{\textit{Proof.} }{ ~ \hfill $\Box$\\\smallskip}
\newtheorem{theorem}{Theorem}[section]
\newtheorem{lemma}[theorem]{Lemma}
\newtheorem{cor}[theorem]{Corollary}
\newtheorem{definition}[theorem]{Definition}
\numberwithin{equation}{section}
\newcommand{\Q}{\mathbb{Q}}
\newcommand{\R}{\mathbb{R}}
\DeclareMathOperator{\image}{im}
\DeclareMathOperator{\Diff}{Diff}
\DeclareMathOperator{\Id}{Id}
\DeclareMathOperator{\Iso}{Iso}
\DeclareMathOperator{\aut}{Aut}
\DeclareMathOperator{\scal}{scal}
\title{On moduli spaces of positive scalar curvature metrics on highly connected manifolds}
\author{Michael Wiemeler\thanks{The research for this work was supported by DFG-Grant HA 3160/6-1, and by SFB 878 ``Groups, Geometry and Actions'' and the Cluster of Excellence ``Mathematics M\"unster'' at WWU M\"unster.}}
\date{ }
\begin{document}

\maketitle

\begin{abstract}
  Let \(M\) be a simply connected spin manifold of dimension at least six which admits a metric of positive scalar curvature.
  We show that the observer moduli space of positive scalar curvature metrics on \(M\) has non-trivial higher homotopy groups.

  Moreover, denote by \(\mathcal{M}_0^+(M)\) the moduli space of positive scalar cuvature metrics on \(M\) associated to the group of orientation-preserving diffeomorphisms of \(M\).
  We show that if \(M\) belongs to a certain class of manifolds which includes \((2n-2)\)-connected \((4n-2)\)-dimensional manifolds, then the fundamental group of \(\mathcal{M}_0^+(M)\) is non-trivial.
\end{abstract}

  %-----------------------------------------------------------------
% End of amsart.template
%-----------------------------------------------------------------------

\section{Introduction}
\label{sec:intro}

The space \(\mathcal{R}(M)\)  of Riemannian metrics  on a closed orientable manifold \(M\) is an open \(\Diff(M)\)-invariant convex cone in the infinite dimensional vector space \(\Gamma(S^2(T^*M))\)  of symmetric \((2,0)\)-tensors on \(M\).
Here \(\Gamma(S^2(T^*M))\) is equipped with the \(C^\infty\)-topology.
\(\mathcal{R}(M)\) contains as an open \(\Diff(M)\)-invariant subset the space \(\mathcal{R}^+(M)\) of positive scalar curvature metrics on \(M\).
If \(G\subset \Diff(M)\) is a subgroup, we say that the orbit space \(\mathcal{M}_G^+(M)=\mathcal{R}^+(M)/G\) is the \emph{moduli space} of positive scalar curvature metrics on \(M\) associated to \(G\).
Of particular interest are here the cases where \(G=\Diff(M)\) is the whole diffeomeorphism group, \(G=\Diff_0(M)\) is the group of orientation-preserving diffeomorphisms, or \(G=\Diff_{ob}(M)\) is the observer diffeomorphism group, i.e., the group of all diffeomorphisms which fix a given point \(x_0\in M\) and act as the identity on \(T_{x_0}M\).
In these cases we let \(\mathcal{M}^+(M)=\mathcal{M}_{\Diff(M)}^+(M)\), \(\mathcal{M}_0^+(M)=\mathcal{M}_{\Diff_0(M)}^+(M)\) and \(\mathcal{M}^+_{ob}(M)=\mathcal{M}^+_{\Diff_{ob}(M)}(M)\).

In recent years a lot of work was devoted to the study of these spaces and moduli spaces.
Here we do not want to repeat the whole development of the subject.
Therefore we refer the reader to the book \cite{MR3445334} for an overview and to \cite{botvinnik14:_infin}, \cite{MR3073935}, \cite{crowley18:_harmon_gromol_toda}, \cite{MR3270591} and \cite{perlmutter17:_param_morse_theor_posit_scalar_curvat} for more recent developments.

In the two recent publications \cite{MR3270591} and \cite{botvinnik14:_infin} it is shown that, for a high dimensional spin manifold \(M\) which admits a metric of positive scalar curvature, the space \(\mathcal{R}^+(M)\) has non-trivial higher rational homotopy groups \(\pi_k(\mathcal{R}^+(M))\otimes \mathbb{Q}\) in certain degrees \(k\).

While for the construction in \cite{MR3270591} the degree \(k\) of these homotopy groups is bounded above by some constant depending on and growing with the dimension of the manifold, the proof in \cite{botvinnik14:_infin} does not require such a bound.
However, in \cite{MR3270591} it was also shown that the natural map
  \begin{equation*}
    \pi_k(\mathcal{R}^+(M),g_0)\otimes \Q\rightarrow\pi_k(\mathcal{M}^+_{ob}(M),[g_0])\otimes \Q
  \end{equation*}
  has non-trivial image if \(M\) is a \(\hat{A}\)-multiplicative fiber in degree \(k\) (see Definition \ref{sec:the-proof-theorem-2}).

  Here we observe that \(M\) is always a \(\hat{A}\)-multiplicative fiber in degree \(k\) if \(k>2\dim M\).
  Combing this observation with the results cited above leads to the following theorem:

\begin{theorem}
\label{sec:introduction}
  Let \(M\) be a closed spin manifold of dimension \(n\geq 6\) which admits a metric of positive scalar curvature.
  If \(k=4s -n-1 >2n\), \(s\in \mathbb{Z}\) and \(g_0\in \mathcal{R}^+(M)\), then 
  \begin{equation*}
    \pi_k(\mathcal{R}^+(M),g_0)\otimes \Q\rightarrow\pi_k(\mathcal{M}^+_{ob}(M),[g_0])\otimes \Q
  \end{equation*}
 has non-trivial image.

  If moreover, there is no non-trivial orientation-preserving action of a finite group on \(M\), then
  \begin{equation*}
    \pi_k(\mathcal{R}^+(M),g_0)\otimes \Q\rightarrow\pi_k(\mathcal{M}_0^+(M),[g_0])\otimes \Q
  \end{equation*}
has non-trivial image.
\end{theorem}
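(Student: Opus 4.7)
The proof combines the two cited works. I first invoke the BERW detection theorem of \cite{botvinnik14:_infin}: under the hypotheses on $M$, the index-difference map $\operatorname{inddiff}_{g_0}:\mathcal{R}^+(M)\to \Omega^{n+1}KO$ induces a rationally non-trivial homomorphism $\pi_k(\mathcal{R}^+(M),g_0)\otimes \Q\to \pi_{k+n+1}(KO)\otimes \Q$ exactly when $k=4s-n-1$ for some $s\geq 1$. Fix a class $\alpha \in \pi_k(\mathcal{R}^+(M),g_0)\otimes \Q$ whose image is non-zero.

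To obtain the first assertion I would promote the index difference to an invariant on the observer moduli space. The observer diffeomorphism group $\Diff_x(M)$ acts freely on $\mathcal{R}^+(M)$ (an isometry of a connected Riemannian manifold fixing a point with identity derivative there must be the identity) and preserves the spin structure (it fixes a frame at $x$). Given a class in $\pi_k(\mathcal{M}^+_{\text{observer}}(M),[g_0])$ represented by $f:S^k\to \mathcal{M}^+_{\text{observer}}(M)$, any two lifts to $S^k\to \mathcal{R}^+(M)$ differ by a based map into $\Diff_x(M)$, and the associated family Dirac operators are intertwined by this action. Consequently, their family index differences relative to $g_0$ coincide in $KO^{-n-1}(S^k)\cong \pi_{k+n+1}(KO)$. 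This defines a homomorphism $\pi_k(\mathcal{M}^+_{\text{observer}}(M),[g_0])\otimes \Q\to \pi_{k+n+1}(KO)\otimes \Q$ compatible with $\operatorname{inddiff}_*$, so the image of $\alpha$ is non-zero.

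For the second assertion the same strategy applies with $\Diff^+(M)$ in place of $\Diff_x(M)$. The hypothesis that no non-trivial finite group acts on $M$ preserving orientation is used to ensure that $\Diff^+(M)$ acts freely on $\mathcal{R}^+(M)$: otherwise, the isometry group of some $g\in \mathcal{R}^+(M)$ would be a positive-dimensional compact Lie group and would contain finite subgroups acting non-trivially on $M$. The main technical obstacle is the well-definedness of the descended invariant on $\pi_k(\mathcal{M}_0^+(M))$: two lifts of an $S^k$-family to $\mathcal{R}^+(M)$ now differ by a map into the much larger group $\Diff^+(M)$, and the agreement of their family index differences in $\pi_{k+n+1}(KO)$ requires more care. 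The hypothesis $k>2n$ is expected to enter precisely here, placing the argument in a stable range where the difference, expressible through the family index of the universal $M$-bundle over $B\Diff^+(M)$ applied to generalised Miller-Morita-Mumford classes, can be controlled and does not interfere with the BERW class.
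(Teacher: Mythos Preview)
Your overall strategy—invoke BERW for a rationally non-trivial class, then factor the index difference through the moduli space—is the paper's strategy. But your factorization step for the observer moduli space has a genuine gap, and this is precisely where the hypothesis $k>2n$ is needed.

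Your intertwining argument does not show that $A_k(f)=A_k(f')$ when $f'(x)=\phi(x)^*f(x)$. The index difference is not the family index of $\{D_{f(x)}\}_{x\in S^k}$ (that index is zero, all operators being invertible); it is a \emph{relative} invariant comparing $f$ to the constant family $g_0$, e.g.\ via the paths $(1-t)g_0+tf(x)$. The diffeomorphisms $\phi(x)\in\Diff_x(M)$ do not fix $g_0$, so they do not carry these paths to the corresponding paths for $f'$. What must actually be verified is that $A_k$ vanishes on the image of $\pi_k(\Diff_x(M))\to\pi_k(\mathcal{R}^+(M),g_0)$, i.e.\ on families $x\mapsto\phi(x)^*g_0$. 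By the family index theorem this value equals $\hat A(E_\phi)$, where $E_\phi\to S^{k+1}$ is the $M$-bundle clutched by $\phi$, and there is no formal reason for this to vanish.

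The paper closes this gap, for \emph{both} the observer and the $\mathcal{M}_0^+$ case, with a single lemma: if $k>2n$ then $\hat A(E)=0$ for \emph{every} oriented $M$-bundle $E\to S^{k+1}$ (the ``$\hat A$-multiplicative fibre in degree $k$'' condition of \cite{MR3270591}, whose Section~2 then supplies the factorization through the observer moduli space). The proof is a short Serre spectral sequence computation: the Pontrjagin classes of $E$ agree with those of the vertical tangent bundle and hence lie in $H^{\leq 2n}(E;\Q)$; since $n<k$ one has $H^j(E;\Q)=0$ for $n<j<k+1$, forcing the Pontrjagin classes into $H^{\leq n}(E;\Q)$; as $\dim E=n+k+1>3n$, every top-degree product of them vanishes, so all Pontrjagin numbers of $E$ are zero. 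Your remark in part~(2) that $k>2n$ should control ``generalised Miller--Morita--Mumford classes'' is pointing at exactly this computation, but the same argument is already required for part~(1); without it the first assertion is unproved.
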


Examples of manifolds on which no finite group acts non-trivially and orientation-preserving have been given by Puppe \cite{MR1368664}.
These examples are simply connected spin manifolds of dimension six.
Therefore, by work of Gromov--Lawson \cite{MR577131}, Schoen--Yau \cite{MR535700} and Stolz \cite{MR1189863}, they admit metrics of positive scalar curvature.
Until now it is an open problem whether there is a simply connected manifold \(M\) which does not admit any non-trivial action of a finite group.
Using results of \cite{MR2680210}, the higher homotopy groups of \(\mathcal{M}^+(M)\) of such a manifold would be non-trivial if \(\mathcal{M}^+(M)\) is non-empty.
There are examples of manifolds \(M\) such that every compact Lie group which acts on one of them is finite.
For these examples the higher homotopy groups of \(\mathcal{M}^+(M)\) are also known to be non-trivial \cite{MR2680210}.

Regarding the fundamental group of spaces and moduli spaces of positive scalar curvature metrics we also show the following theorem:

\begin{theorem}
\label{sec:introduction-1}
Let \(n>1\) and \(M\) be a closed spin manifold of dimension \(4n-2\) which admits a metric of positive scalar curvature and is a \(\hat{A}\)-multiplicative fiber in degree \(1\).
Let, moreover, \(g_0\in\mathcal{R}^+(M)\).
Then the image of the map \[\pi_1(\mathcal{R}^+(M),g_0)\rightarrow \pi_1(\mathcal{M}_0^+(M),[g_0])\] contains elements of infinite order.
\end{theorem}

Examples of \(\hat{A}\)-multiplicative fibers in degree \(1\) are manifolds whose even degree rational cohomology vanishes in all degrees except degree \(0\) and the top degree (see Lemma~\ref{sec:hata-mult-fibers}).
In particular, this holds for spheres, or more generally for \((2n-2)\)-connected \((4n-2)\)-dimensional manifolds, or connected sums of several copies of products \(S^{2m+1}\times S^{2m'+1}\) of odd dimensional spheres.

For the proof of this theorem we combine the results of \cite{MR3270591} and \cite{botvinnik14:_infin} with a  result of Bourguignon \cite{MR0418147}, which allows to lift paths in \(\mathcal{M}(M)\) to paths in \(\mathcal{R}(M)\), and transversality considerations.
While the transversality considerations generalize to the situation of higher homotopy groups, the path lifting result does not.
Therefore we do not know whether Theorem~\ref{sec:introduction-1} generalizes to higher homotopy groups.

As far as we know, the classes constructed in Theorem~\ref{sec:introduction-1} are the first examples of elements in the fundamental group of the space of positive scalar curvature metrics which decent to elements of infinite order in the fundamental group of \(\mathcal{M}_0^+(M)\).
Note that, by \cite{MR3268776}, \(\pi_1(\mathcal{R}^+(S^{4n-2}))\) is abelian.
The space \(\mathcal{R}^+(S^2)\) is known to be contractible \cite{MR1818778}.
Hence, the above theorem is false for \(n=1\). 

Using the same techniques as in the proof of Theorem~\ref{sec:introduction-1} we can also prove the following theorem.

\begin{theorem}
  \label{sec:introduction-2}
  Let \(n>2\) be odd and \(0<m_1<m_2<n\) such that \(m_1+m_2=n\).
Moreover, let \(N\) be a closed connected orientable manifold admitting a metric of positive scalar curvature such that
\begin{enumerate}
\item \(H^k(N;\mathbb{Z})=0\) for \(k=m_2-m_1,m_1,m_2,n\),
\item all automorphisms of \(H^*(N;\mathbb{Z})\) are orientation preserving, i.e., induce multiplication with \(1\) in top degree.
\end{enumerate}

  Then, for \(M=S^{m_1}\times S^{m_2}\times S^n\times N\), 
  \[\pi_1(\mathcal{M}_0^+(M),[g_0])\]
  contains elements of infinite order.
  Here \(g_0=h+\epsilon^2 g\) is a product metric with \(h\in \mathcal{R}(S^{m_1}\times S^{m_2}\times S^n)\), \(g\in \mathcal{R}^+(N)\) and \(\epsilon>0\) sufficiently small.
\end{theorem}

Examples of manifolds for which the above assumptions on \(N\) hold are complex projective spaces \(\mathbb{C} P^{2k}\) of even complex dimension, quaternionic projective spaces \(\mathbb{H}P^{2k}\) of even quaternionic dimension and the Cayley plane.

It follows from the proof of Theorem~\ref{sec:introduction-2} that the elements in the fundamental group of \(\pi_1(\mathcal{M}_0^+(M),[g_0])\) constructed there are not in the image of the map \(\pi_1(\mathcal{R}^+(M),g_0)\rightarrow\pi_1(\mathcal{M}_0^+(M),[g_0])\).

This note is structured as follows.
 In the next Section \ref{sec:proof} we prove Theorem~\ref{sec:introduction}.
In Section~\ref{sec:trans}
 we recall some transversality results in the context of infinite dimensional manifolds.
Then in Section \ref{sec:ebin} we recall Ebin's slice theorem and some of its consequences.
In Section \ref{sec:proof2} we give the proofs of Theorems~\ref{sec:introduction-1} and \ref{sec:introduction-2}.

I would like to thank Anand Dessai for comments on an earlier version of this article.

\section{$\hat{A}$-multiplicative fibers}
\label{sec:proof}

In this section we recall the definition of $\hat{A}$-multiplicative fibers and prove Theorem~\ref{sec:introduction}.
Our proof of this theorem is based on the following result:

\begin{theorem}[{\cite[Theorem A]{botvinnik14:_infin}}]
\label{sec:the-proof-theorem}
  Let \(M\) be a closed spin manifold of dimension \(n\geq 6\).
  If \(k= 4s - n- 1\geq 0\) and \(g_0\in \mathcal{R}^+(M)\), then the map
  \begin{equation*}
    A_k\otimes \Q :\pi_k(\mathcal{R}^+(M),g_0)\otimes \Q\rightarrow \Q
  \end{equation*}
  is surjective.
\end{theorem}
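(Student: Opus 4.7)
The plan is to realise $A_k$ as Hitchin's higher index difference and then to produce enough classes in $\pi_k(\mathcal{R}^+(M),g_0)$ that hit every rational number. For a based map $\phi\colon (S^k,*)\to(\mathcal{R}^+(M),g_0)$ one assembles a family of spin Dirac operators on $M$ parametrised by $S^k$, all of which are invertible because each fibre carries positive scalar curvature; a concordance to the constant family at $g_0$ yields a $KO$-theoretic difference class
\begin{equation*}
  A_k(\phi)\in KO^{-(n+k+1)}(\mathrm{pt})=\pi_{n+k+1}(KO).
\end{equation*}
The arithmetic constraint $k=4s-n-1$ makes the target $\pi_{4s}(KO)\cong\mathbb{Z}$, which rationalises to $\mathbb{Q}$, so the content of the theorem is the rational surjectivity of this index difference.

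To prove surjectivity I would follow the Botvinnik--Ebert--Randal-Williams strategy of exhibiting $A_k$ as the restriction to $\mathcal{R}^+(M)$ of a map of infinite loop spaces. First, using the Chernysh--Walsh theorem (invariance of the weak homotopy type of $\mathcal{R}^+(M)$ under codimension-${\geq}3$ surgeries), every psc cobordism $W\colon M\leadsto M$ induces a self-map of $\mathcal{R}^+(M)$ by parametric attachment and stretching. These self-maps assemble $\mathcal{R}^+(M)$ into a module over the loop space of a psc spin cobordism category $\mathcal{C}^+_n$, and Hitchin's index difference can be shown to intertwine this action with a secondary family index defined on the category.

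Next I would apply the Galatius--Madsen--Tillmann--Weiss theorem, together with the Ebert--Randal-Williams result (built on parametric Gromov--Lawson surgery) that the psc cobordism category is weakly equivalent to the ordinary spin cobordism category, to identify $\Omega B\mathcal{C}^+_n$ with $\Omega^{\infty-1}MT\mathrm{Spin}(n)$. The family index then becomes a map of spectra $MT\mathrm{Spin}(n)\to KO$, and in rational degree $4s$ it is computed by integrating the $\hat{A}$-polynomial in the Mumford--Miller--Morita classes against the Thom class; the non-vanishing of $\hat{A}_s$ on a suitable spin manifold (e.g.\ a quaternionic projective space in low dimensions, or a stable tangential class in general) shows this rational map is surjective. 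Pulling the resulting generators back through the action map on $\mathcal{R}^+(M)$ produces the desired rational classes in $\pi_k(\mathcal{R}^+(M),g_0)\otimes\mathbb{Q}$.

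The main obstacle is the middle step: promoting formal psc cobordisms to an action on $\mathcal{R}^+(M)$ and verifying the intertwining with the index difference. This requires uniform parametric control of the Gromov--Lawson--Chernysh surgery construction, together with a families version of the Bunke-type gluing formula for Dirac operators with psc boundary conditions so that the two index constructions agree on the nose (or at least up to controlled homotopy). Once this compatibility is in place, the rational non-vanishing at the spectrum level reduces to classical $KO$-theoretic computations.
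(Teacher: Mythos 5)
This statement is not proved in the paper you are reading: it is quoted as Theorem~A of Botvinnik--Ebert--Randal-Williams \cite{botvinnik14:_infin}, and the paper's own argument begins only afterward, with the factorisation results (Theorem~\ref{sec:the-proof-theorem-1} and the lemma on $\hat{A}$-multiplicative fibres). There is therefore no in-text proof to compare your proposal against; what you have written is a sketch of the \emph{cited} reference's argument.

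Taken as such a summary, your sketch captures the right shape: the secondary index as a $KO$-valued invariant with target $\pi_{4s}(KO)\cong\mathbb{Z}$, surgery-invariance of $\mathcal{R}^+(M)$ via Chernysh--Walsh, a fibre-transport action of a Madsen--Weiss/cobordism-category type object on $\mathcal{R}^+(M)$, the Galatius--Madsen--Tillmann--Weiss identification with $\Omega^{\infty}MT\mathrm{Spin}(n)$, and rational detection at the spectrum level. You also correctly identify the genuine difficulty as the additivity theorem making the index difference compatible with the cobordism-category action; that is the core technical result of that paper. Two details should be corrected, however. First, you cannot hope to detect $\hat{A}_s$ by evaluating on a quaternionic projective space: $\mathbb{HP}^n$ carries a metric of positive (sectional, hence scalar) curvature, so $\hat{A}(\mathbb{HP}^n)=0$ by Lichnerowicz. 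The rational non-triviality is instead computed directly from the Pontryagin character of the spectrum map $MT\mathrm{Spin}(n)\to KO$ against the rational homotopy of the Thom spectrum, not by exhibiting a single closed psc manifold with non-vanishing $\hat{A}$-genus. Second, the reference does not prove that a ``psc cobordism category'' is weakly equivalent to the ordinary spin cobordism category; the actual input is the parametrised Gromov--Lawson--Chernysh surgery theorem, which makes certain restriction maps $\mathcal{R}^+(W)\to\mathcal{R}^+(\partial W)$ into quasi-fibrations, so that propagation of psc metrics along cobordisms behaves well in families. The overall architecture you describe is nonetheless the correct one.
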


Here \(A_k\) denotes the secondary index invariant for metrics of positive scalar curvature metrics.
There are two definitions for this invariant one is by Hitchin \cite{MR0358873}, the other is by Gromov and Lawson \cite{MR720933}.
However, as shown in \cite{ebert14}, these two definitions lead to the same invariant.

We also need the following definition from \cite{MR3270591}:

\begin{definition}
  \label{sec:the-proof-theorem-2}
  Let \(M\) be an oriented closed smooth manifold. We call \(M\) a \(\hat{A}\)-multiplicative fiber in degree \(k\) if for every oriented fiber bundle \(M\rightarrow E\rightarrow S^{k+1}\) we have \(\hat{A}(E)=0\).
\end{definition}

The following two lemmas give sufficient conditions for general manifolds to be \(\hat{A}\)-multiplicative in some degree.

\begin{lemma}
  \label{sec:the-proof-theorem-3}
  Let \(M\) be a closed oriented smooth manifold of dimension \(n\geq 1\).
  If \(k>2n\), then \(M\) is a \(\hat{A}\)-multiplicative fiber in degree \(k\).
\end{lemma}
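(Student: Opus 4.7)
The plan is to show that the top-degree component $\hat{A}(TE)_{n+k+1}$ already vanishes in $H^{n+k+1}(E;\Q)$, so that $\hat{A}(E)=\langle\hat{A}(TE),[E]\rangle=0$.

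Write $\pi\colon E\to S^{k+1}$ for the bundle projection and $T_vE$ for the vertical tangent bundle, so that $TE\cong T_vE\oplus\pi^*TS^{k+1}$. Since $TS^{k+1}$ is stably parallelisable, $\pi^*TS^{k+1}$ has trivial Pontryagin classes and hence $\hat{A}(TE)=\hat{A}(T_vE)$ in $H^*(E;\Q)$.

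Next, I would analyse the rational cohomology of $E$ through the Serre spectral sequence of $M\to E\to S^{k+1}$. Because $k\geq 3$ the base is simply connected, the local system is trivial, and the $E_2$-page is concentrated in the two columns $p\in\{0,k+1\}$ and in rows $0\leq q\leq n$. It follows that $H^j(E;\Q)=0$ for every $j$ with $n<j<k+1$. The hypothesis $k>2n$ guarantees $2n<k+1$, so this vanishing window covers all $j$ with $n<j\leq 2n$. Combined with the rank bound $p_i(T_vE)=0$ for $4i>2n$, we obtain $p_i(T_vE)=0$ whenever $4i>n$, so only Pontryagin classes of $T_vE$ in degree $\leq n$ may contribute to $\hat{A}(T_vE)$.

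A straightforward induction on the number of factors now shows that every non-zero monomial in the surviving classes $p_i(T_vE)$ has total degree $\leq n$: if a non-zero partial product sits in degree $d\leq n$ and we multiply it by a further factor of degree $4i\leq n$, the resulting class lies in degree $\leq 2n<k+1$, and if this exceeds $n$ it falls into the vanishing window so that the new partial product is zero. Since $n+k+1>n$, no monomial of total degree $n+k+1$ can survive, hence $\hat{A}(T_vE)_{n+k+1}=0$ and the lemma follows. The key technical point—and the reason one cannot weaken the hypothesis beyond $k>2n$—is exactly this induction step: the cohomological gap $(n,k+1)$ must be wider than $n$, so that no single Pontryagin class of $T_vE$ is able to leap across it in one multiplication.
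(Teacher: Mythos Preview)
Your argument is correct and follows the same route as the paper's proof: reduce the Pontrjagin classes of $E$ to those of the vertical bundle via stable triviality of $TS^{k+1}$, use the Serre spectral sequence to obtain the cohomological gap $H^j(E;\Q)=0$ for $n<j<k+1$, conclude that the Pontrjagin classes live in degree $\leq n$, and then observe that their products cannot reach the top degree. Your write-up is somewhat more explicit about the inductive step for products of Pontrjagin classes, but the content is identical to the paper's argument.
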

\begin{proof}
  Let \(M\rightarrow E\rightarrow S^{k+1}\) be a smooth oriented fiber bundle.
  The tangent bundle of \(E\) is isomorphic to \(p^*(TS^{k+1})\oplus V\) where \(V\) is the bundle along the fiber.
  Since \(TS^{k+1}\) is stably trivial, it follows that \(TE\) and \(V\) are stably isomorphic.
  Therefore the Pontrjagin classes of \(E\) are concentrated in degrees smaller or equal to \(2n\).
  Moreover, since \(n<2n<k\), it follows from an inspection of the Serre spectral sequence for the fibration that \(H^{j}(E;\Q)=0\) for \(n<j<k+1\).
  Because \(2n<k\) the Pontrjagin classes of \(E\) are concentrated in degrees smaller or equal to \(n\).
  Moreover it follows that all products of these classes of degree greater than \(n\) must vanish.
  In particular all the Pontrjagin numbers of \(E\) vanish and the lemma is proved.
\end{proof}

\begin{lemma}
  \label{sec:hata-mult-fibers}
  Let \(M\) be a closed oriented smooth manifold of dimension \(n\geq 1\).
  Assume that the even degree rational cohomology of \(M\) vanishes in all degrees except degree \(0\) and the top degree.

  Then \(M\) is a \(\hat{A}\)-multiplicative fiber in degree one.
\end{lemma}
\begin{proof}
  Let \(M\rightarrow E\rightarrow S^2\) be a smooth oriented fiber bundle.
  Then it follows from an inspection of the Serre spectral sequence, that \(H^{4k}(E;\mathbb{Q})=0\) for all \(k>0\) with \(4k\neq \dim E=n+2\).
  Therefore all rational Pontrjagin classes of \(E\), except maybe \(p_{(n+2)/4}(E)\), vanish.
  However, \(p_{(n+2)/4}(E)\) also vanishes because the signature is multiplicative in fiber bundles with simply connected base \cite{MR0087943}.

  Hence it follows that \(\hat{A}(E)=0\) and the lemma is proved.
\end{proof}

For \(\hat{A}\)-multiplicative manifolds the following is known:

\begin{theorem}
\label{sec:the-proof-theorem-1}
  Let \(M\) be a closed spin manifold and \(k\geq 2\). If \(M\) is a \(\hat{A}\)-multiplicative fiber in degree \(k\), then the following holds:
  \begin{enumerate}
  \item The map \(A_k\otimes \Q\) from above factors through \(\pi_k(\mathcal{M}_{ob}^+(M),[g_0])\otimes \Q\).
  \item If there is no non-trivial smooth orientation-preserving action of a finite group on \(M\) then \(A_k\otimes \Q\) factors through \(\pi_k(\mathcal{M}_0^+(M),[g_0])\otimes \Q\).
  \end{enumerate}
\end{theorem}
\begin{proof}
  The first statement is proved in \cite[Section 2]{MR3270591}.
  The proof of the second statement is similar. Since there is no non-trivial orientation-preserving smooth action of a finite group on \(M\) the isometry group of any Riemannian metric on \(M\) contains at most one orientation-reversing involution and the identity.
  Therefore the group \(\Diff_0(M)\) of orientation-preserving diffeomorphisms acts freely on \(\mathcal{R}^+(M)\).

  In particular, there is an exact sequence
  \begin{equation*}
    \pi_k(\Diff_0(M),\Id_M)\otimes \Q\rightarrow \pi_k(\mathcal{R}^+(M),g_0)\otimes \Q\rightarrow \pi_k(\mathcal{M}_0^+(M),[g_0])\otimes \Q.
  \end{equation*}

  Since \(M\) is an \(\hat{A}\)-multiplicative fiber in degree \(k\) it follows from the arguments in \cite[Section 2]{MR3270591} that \(A_k\otimes \Q\) vanishes on the image of \(\pi_k(\Diff_0(M),\Id_M)\otimes \Q\).
  Therefore the theorem follows.
\end{proof}

Our Theorem~\ref{sec:introduction} now follows from the two theorems in this section and Lemma \ref{sec:the-proof-theorem-3}.

\section{Transversality}
\label{sec:trans}

In this section we start to collect the necessary material for the proofs of Theorems~\ref{sec:introduction-1} and \ref{sec:introduction-2}.
We need the following transversality result for maps into infinite dimensional vector spaces.

\begin{lemma}
\label{sec:transversality}
  Let \(F\) be a topological vector space and \(p:F\rightarrow \mathbb{R}^{n+1}\) a continuous linear surjective map and \(U\subset F\) an open neighborhood of zero.

If \(f:D^n\rightarrow F\) is a continuous map and \(K_1,\dots, K_m\subset D^n\) are compact such that there are open \(U_1,\dots,U_m\subset U\) with \(f(K_i)\subset U_i\),
then there is a continuous map \(f':D^n\rightarrow U\), such that
\begin{enumerate}
\item \(f'(D_{1/2}^n)\subset U-\{0\}\),
\item there is a homotopy \(h_t\)  from \(f\) to \(f'\), such that \(h_t(K_i)\subset U_i\) for all \(t\) and \(i\) and \(h_t|_{\partial D^n}=f|_{\partial D^n}\).
\end{enumerate}
\end{lemma}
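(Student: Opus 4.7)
The plan is to construct $f'$ from $f$ by a cutoff-modulated translation along a small vector lifted from $\mathbb{R}^{n+1}$, and to take $h_t$ to be the straight-line interpolation. Since $p$ is a continuous linear surjection onto a finite-dimensional space, choosing preimages $e_0,\dots,e_n\in F$ of a basis of $\mathbb{R}^{n+1}$ yields a continuous linear section $s\colon\mathbb{R}^{n+1}\to F$ of $p$, given by $s(v):=\sum_i v_i e_i$. Fix in addition a continuous cutoff $\phi\colon D^n\to[0,1]$ which is identically $1$ on $D_{1/2}^n$ and vanishes in an open neighborhood of $\partial D^n$, and for a small vector $v\in\mathbb{R}^{n+1}$ to be chosen below set
\[
h_t(x):=f(x)-t\,\phi(x)\,s(v),\qquad f':=h_1.
\]
Then automatically $h_0=f$, $h_t|_{\partial D^n}=f|_{\partial D^n}$, and the displacement $h_t(x)-f(x)$ ranges over the segment from $0$ to $-s(v)$ uniformly in $(t,x)$, so it fits inside any prescribed $0$-neighborhood of $F$ once $v$ is small enough.

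The containment constraints $h_t(K_i)\subset U_i$ for every $t$ and $f'(D^n)\subset U$ (reading the hypotheses as $f(D^n)\subset U$, which appears to be the intended setting) follow from compactness and continuity of addition in the topological vector space $F$: because each $f(K_i)$ is compact in the open set $U_i$, there is an open $0$-neighborhood $W_i\subset F$ with $f(K_i)+W_i\subset U_i$, and similarly an open $0$-neighborhood $W$ with $f(D^n)+W\subset U$. Taking $v$ small enough that $-t\,s(v)\in W\cap\bigcap_i W_i$ for all $t\in[0,1]$ secures the desired inclusions for the whole homotopy.

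The main obstacle is arranging $f'(D_{1/2}^n)\not\ni 0$. Since $\phi\equiv 1$ on $D_{1/2}^n$, applying $p$ reduces this to finding $v$ arbitrarily small in $\mathbb{R}^{n+1}$ with $v\notin (p\circ f)(D_{1/2}^n)$; this is the nontrivial step, because $p\circ f$ is merely continuous and its image can contain a neighborhood of $0$ (Peano-curve phenomena), so a direct dimension count fails. I would circumvent this by a standard approximation-plus-Sard argument. Approximate $p\circ f$ uniformly by a smooth map $g\colon D^n\to\mathbb{R}^{n+1}$ with $\|g-p\circ f\|_\infty<\delta$; since $\dim D^n=n<n+1$, every point of $D^n$ is critical for $g$, so by Sard's theorem $g(D_{1/2}^n)$ is compact of Lebesgue measure zero in $\mathbb{R}^{n+1}$ and the measure of its $\delta$-neighborhood tends to $0$ as $\delta\to 0$. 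Choosing $\delta$ small enough, one can pick $v\in B_\varepsilon(0)$ at distance $>\delta$ from $g(D_{1/2}^n)$, and then the triangle inequality
\[
|p(f(x))-v|\ \geq\ |g(x)-v|-|g(x)-p(f(x))|\ >\ \delta-\delta\ =\ 0
\]
for every $x\in D_{1/2}^n$ shows $v\notin (p\circ f)(D_{1/2}^n)$; shrinking $\varepsilon$ so that the smallness condition of the previous paragraph also holds then completes the construction.
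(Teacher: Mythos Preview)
There is a genuine gap in the Sard step, and it stems from the choice of perturbation. Your homotopy is a \emph{constant} translation $f\mapsto f-\phi\cdot s(v)$, and on $D_{1/2}^n$ the condition you try to force via $p$ is $v\notin (p\circ f)(D_{1/2}^n)$. But this can be unachievable for every small $v$: if $p\circ f|_{D_{1/2}^n}$ is a space-filling map onto a ball $\overline{B_\varepsilon(0)}\subset\mathbb{R}^{n+1}$ (a possibility you explicitly acknowledge), then every $v\in B_\varepsilon(0)$ lies in the image. Your approximation-plus-Sard argument does not repair this. Once you require $\|g-p\circ f\|_\infty<\delta$, the closed $\delta$-neighborhood of $g(D_{1/2}^n)$ automatically \emph{contains} $(p\circ f)(D_{1/2}^n)$, since $|p(f(x))-g(x)|\le\delta$ for each $x$; hence its measure is bounded below by $|(p\circ f)(D_{1/2}^n)|$, which is positive in the space-filling case. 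So the measure does not tend to $0$ as $\delta\to 0$, and no admissible $v$ exists. Taking $F=\mathbb{R}^{n+1}$ with $p=\mathrm{id}$ shows that a rigid shift is simply the wrong kind of perturbation here.

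The remedy is to make the perturbation depend on $x$: replace the $\mathbb{R}^{n+1}$-component of $f$ by a smooth map rather than merely translating it. Choose a smooth $g$ with $\|g-p\circ f\|_\infty$ small, pick a small $w\notin g(D_{1/2}^n)$ (now this \emph{is} possible, since $g(D_{1/2}^n)$ has measure zero by Sard), and set
\[
h_t(x)\;=\;f(x)+t\,\phi(x)\,s\bigl(g(x)-w-p(f(x))\bigr).
\]
On $D_{1/2}^n$ one gets $p(f'(x))=g(x)-w\neq 0$, hence $f'(x)\neq 0$; and the displacement $h_t(x)-f(x)$ is uniformly small, so your compactness argument for the constraints $h_t(K_i)\subset U_i$ and $f'(D^n)\subset U$ goes through unchanged. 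This is precisely what the paper's terse proof is pointing at: the splitting $F\cong\mathbb{R}^{n+1}\times F'$ induced by $p$ reduces avoiding the point $0\in F$ to avoiding $0$ in the finite-dimensional factor, and there one invokes genuine transversality (smooth approximation followed by Sard), not a rigid translation.
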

\begin{proof}
  The existence of \(p\) guarantees that \(F\) is isomorphic as a topological vector space to \(\mathbb{R}^{n+1}\times F'\) for some closed subvector space \(F'\subset F\).
Therefore the lemma follows from the finite dimensional case \(F=\R^{n+1}\).
This case follows from transversality considerations as in the proof of Lemma 18.5 of \cite{MR0440554}.
\end{proof}

Note that,  if there is a continuous scalar product on an infinite dimensional topological vector space, then there is always a map \(p\) as above.
One can define \(p\) to be the orthogonal projection onto some \((n+1)\)-dimensional subvector space.
Note moreover, that if \(F\) is an infinite dimensional Banach space, then the existence of \(p\) is guaranteed by the Theorem of Hahn and Banach.
However, in the situation, in which we will apply the above lemma, \(F\) is not a Banach space; but has a continuous scalar product.

\section{Consequences of Ebin's slice theorem}
\label{sec:ebin}

Besides the transversality result from the previous section we need some knowledge about the local structure of the \(\Diff(M)\)-action on \(\mathcal{R}(M)\) to prove Theorems~\ref{sec:introduction-1} and \ref{sec:introduction-2}.
The basic structure result which we need is Ebin's slice theorem \cite{MR0267604} (see also \cite{MR0418147} and \cite{corro19:_short_rieman}).
Before we state it we fix some notations for the rest of this section.

We denote by \(\mathcal{R}\) an open \(\Diff(M)\)-invariant subset of the space of Riemannian metrics on the closed manifold \(M\).
We let \(G\subset \Diff(M)\) be a closed subgroup which contains the identity component of \(\Diff(M)\).
For example, we could have \(G=\Diff(M)\) or \(G=\Diff_0(M)\).
Moreover, we let \(\mathcal{M}=\mathcal{R}/G\).
For a metric \(g\in \mathcal{R}\) we denote by \(\Iso(g)\subset \Diff(M)\) the isometry group of \(g\) and by \(\Iso_G(g)\) the intersection of \(\Iso(g)\) with \(G\).
Note that, since \(M\) is compact, \(\Iso(g)\) and \(\Iso_G(g)\) are compact Lie groups.
Moreover, note that \(\mathcal{R}\) is a Fr\'echet manifold because it is an open subset of the Fr\'echet space \(\Gamma(S^2(T^*M))\) which is equipped with the \(C^{\infty}\)-topology.

Now we can state the slice theorem of Ebin.

\begin{theorem}
  For every \(g\in \mathcal{R}\) there exists a contractible \(\Iso(g)\)-invariant Fr\'echet submanifold \(S\) of \(\mathcal{R}\) containing \(g\) such that
  \begin{enumerate}
  \item If \(\eta\in \Diff(M)\) and \((\eta S)\cap S\neq \emptyset\), then \(\eta\in \Iso(g)\).
  \item There exists a local cross-section \(\chi:\Diff(M)/\Iso(g)\rightarrow \Diff(M)\) defined in a neighborhood \(U\) of the identity coset such that if \(\Phi:U\times S\rightarrow \mathcal{R}\) is defined as \((u,s)\mapsto \chi(u)s\), then \(\Phi\) is a homeomorphism onto a neighborhood of \(g\).
  \end{enumerate}
\end{theorem}

Since \(S\) is a manifold, there is a neighborhood of \(g\) in \(S\) which is \(\Iso(g)\)-equivariantly homeomorphic to an open invariant contractible subset \(D(E)\) of some topological vector space \(E\) with \(0\in D(E)\), where \(\Iso(g)\) acts linearly on \(E\).
Next we want to gather some properties of this vector space \(E\).
We follow \cite{MR0418147} for the description of these properties.

First note that on \(\Gamma(S^2(T^*M))\) we can define a continuous scalar product \(\langle\cdot,\cdot\rangle_g\)   by
\begin{equation*}
  \langle h,h'\rangle_g=\int_M \;g(h,h')\; dvol_g,
\end{equation*}
where \(h,h'\in \Gamma(S^2(T^*M))\) and \(dvol_g\) denotes the volume element of the Riemannian metric \(g\).

Moreover, the tangent space of the \(\Diff(M)\)-orbit of \(g\) in \(g\) is given by the image \(V\) of the linear operator
\begin{align*}
  \delta_g:\Gamma(TM)&\rightarrow \Gamma(S^2(T^*M))& \delta_g(X)&=\frac{1}{2}\mathcal{L}_Xg,
\end{align*}
where \(\mathcal{L}_X\) denotes the Lie derivative.
It has been shown by Ebin that \(V\) is a closed subspace of \(\Gamma(S^2(T^*M))\) and that it has an orthogonal complement with respect to \(\langle\cdot,\cdot\rangle_g\), i.e. there exists a closed subvector space \(W\) of \(\Gamma(S^2(T^*M))\), such that
\begin{enumerate}
\item \(\Gamma(S^2(T^*M))=V\times W\) as a topological vector space, and
\item \(\langle V,W\rangle_g=0\).
\end{enumerate}
The vector space \(E\) from above is this \(W\).

Note that by the \(\Iso(g)\)-invariance of \(V\) and the scalar product, \(E\) is \(\Iso(g)\)-invariant.
The next step is to understand the \(\Iso(g)\)-action on \(E\).

Note first that there is a projection map
\begin{align*}
  p:\Gamma(S^2(T^*M))&\rightarrow \Gamma(S^2(T^*M))^{\Iso(g)}&p(h)(v,w)&=\int_{\Iso(g)}(\gamma^*h)(v,w)\;d\gamma,
\end{align*}
where \(v,w\in TM\) and \(d\gamma\) denotes the Haar measure on \(\Iso(g)\).

This projection map restricts to a projection \(p':E\rightarrow E^{\Iso(g)}\).
Indeed, for \(X\in \Gamma(TM)\) and \(h\in E\) we have:
\begin{align*}
  \langle p(h), \mathcal{L}_Xg\rangle_g&=\int_M\int_{\Iso(g)} g(\gamma^*h,\mathcal{L}_Xg)\;d\gamma\;dvol_g\\
                                            &= \int_{\Iso(g)}\int_M g(h,\mathcal{L}_{\gamma^*X}g)\;dvol_g\;d\gamma\\
  &=\int_{\Iso(g)}\langle h,\mathcal{L}_{\gamma^*X}g\rangle_g\;d\gamma=0.
\end{align*}

Therefore there is a closed subvector space \(F\) of \(E\) such that \(E=E^{\Iso(g)}\times F\) as a topological vector space.
By \cite[Proposition III.20]{MR0418147}, \(F\) is infinite dimensional if \(\Iso(g)\) is non-trivial.

Note, that by the slice theorem \(\mathcal{M}\) is locally modeled on quotients \(D(E)/\Iso_G(g)\).
In the following we will call these local models \(D(E)/\Iso_G(g)\) Ebin charts of \(\mathcal{M}\).
Furthermore, we refer to \(D(E)\) as an Ebin slice.

Note, moreover, that \(\mathcal{R}\) and \(\mathcal{M}\) can be split into strata which consist of metrics with the same isometry group up to conjugation in \(\Diff(M)\).
As a consequence of the slice theorem, for every \(g'\in \Phi(U\times S)\), \(\Iso(g')\) is conjugated in \(\Diff(M)\) to a subgroup of \(\Iso(g)\) (see \cite[Theorem 8.1]{MR0267604}).
Furthermore, it follows from the proof of that theorem that the minimal stratum in \(\Phi(U\times S)\), i.e. the set of those metrics in \(\Phi(U\times S)\) whose isometry group is conjugated to \(\Iso(g)\), is given by \(\Phi(U\times S^{\Iso(g)})\).
Since the vector space \(F\) from above has infinite dimension, it follows from Lemma~\ref{sec:transversality} that maps from finite dimensional manifolds to \(\Phi(U\times S)\) can be made transverse to the minimal stratum.

For our argument we do not need the full knowledge about the stratifications of \(\mathcal{R}\) and \(\mathcal{M}\).
However, we have to study the following subspaces of \(\mathcal{M}\) and \(\mathcal{R}\) which are defined via the stratification.
For \(m,k\in \mathbb{N}\), \(m+k\geq 1\), we denote by \(\mathcal{M}_{m,k}\) (by \(\mathcal{R}_{m,k}\)) the subspace of \(\mathcal{M}\) (of \(\mathcal{R}\), respectively) which consists of classes of those metrics \([g]\) (of those metrics \(g\), respectively) with \(\dim \Iso(g)< m\) or (\(\dim \Iso(g)=m\) and \(|\Iso(g)/\Iso(g)^0|\leq k\)). Here \(\Iso(g)^0\) denotes the identity component of \(\Iso(g)\).

Note that \(\mathcal{R}_{m,k}\) is \(\Diff(M)\)-invariant and \(\mathcal{M}_{m,k}=\mathcal{R}_{m,k}/G\).
Moreover, \(\mathcal{R}_{0,1}\) is the dense open subset of \(\mathcal{R}\) which consists of those metrics which have no non-trivial isometries.
Therefore, \(\Diff(M)\) acts freely on \(\mathcal{R}_{0,1}\) and, by the slice theorem, the orbit map \(\mathcal{R}_{0,1}\rightarrow \mathcal{M}_{0,1}\) is a fibration.

The inclusions \(\mathcal{M}_{m,k-1}\subset \mathcal{M}_{m,k}\subset \mathcal{M}\) are inclusions of open sets because each \([g]\in \mathcal{M}_{m,k-1}\) has a neighborhood \(N\) such that for each \([g']\in N\), \(\Iso(g')\) is conjugated to a subgroup of \(\Iso(g)\).

Since \(S^n\) and \(D^{n+1}\) are compact for all \(n\in \mathbb{N}\), we have the following lemma about the homology and homotopy groups of the \(\mathcal{M}_{m,k}\) and \(\mathcal{R}_{m,k}\).

\begin{lemma}
  \label{sec:cons-ebins-slice-1}
  Let \(\mathcal{P}_{m,k}=\mathcal{M}_{m,k}\) or \(\mathcal{P}_{m,k}=\mathcal{R}_{m,k}\) for \(m,k\in\mathbb{N}\), \(m+k\geq 1\).
  Then for all \(n\in \mathbb{N}\) we have:
  \begin{enumerate}
\item For \(m\geq 1\): \(\pi_n(\mathcal{P}_{m,0})=\varinjlim_k \pi_n(\mathcal{P}_{m-1,k})\)
\item \(\pi_n(\mathcal{P})=\varinjlim_m \pi_n(\mathcal{P}_{m,0})\)
\item For \(m,k\in \mathbb{N}\), \(m+k\geq 1\): \(\ker(\pi_n(\mathcal{P}_{m,k})\rightarrow \pi_n(\mathcal{P}_{m+1,0}))= \varinjlim_j \ker(\pi_n(\mathcal{P}_{m,k})\rightarrow \pi_n(\mathcal{P}_{m,j}))\)
\item For \(m\geq 1\): \(\ker(\pi_n(\mathcal{P}_{m,0})\rightarrow \pi_n(\mathcal{P}))= \varinjlim_j \ker(\pi_n(\mathcal{P}_{m,0})\rightarrow \pi_n(\mathcal{P}_{j,0}))\)
\item\label{item:1} For \(m,k\in \mathbb{N}\), \(m+k\geq 1\): \(\ker(H_n(\mathcal{P}_{m,k})\rightarrow H_n(\mathcal{P}_{m+1,0}))= \varinjlim_j \ker(H_n(\mathcal{P}_{m,k})\rightarrow H_n(\mathcal{P}_{m,j}))\)
\item\label{item:2} For \(m\geq 1\): \(\ker(H_n(\mathcal{P}_{m,0})\rightarrow H_n(\mathcal{P}))= \varinjlim_j \ker(H_n(\mathcal{P}_{m,0})\rightarrow H_n(\mathcal{P}_{j,0}))\)
\end{enumerate}
  \end{lemma}
  \begin{proof}
    We only indicate the proof of the first claim.
    The proofs of the other claims are similar.
    Note that there is a natural map \(\Psi:\varinjlim_k\pi_n(\mathcal{P}_{m-1,k})\rightarrow \pi_n(\mathcal{P}_{m,0})\) induced by the inclusions \(\mathcal{P}_{m-1,k}\rightarrow \mathcal{P}_{m,0}\).

    We first show that \(\Psi\) is surjective.
    Let \(f:S^n\rightarrow \mathcal{P}_{m,0}\) be a map representing some element \([f]\in\pi_n(\mathcal{P}_{m,0})\).
    Then, because \(f(S^n)\) is compact and \(\mathcal{P}_{m,0}\) is the union of the open subsets \(\mathcal{P}_{m-1,k}\), there is a \(k>0\) such that \(f(S^n)\subset\mathcal{P}_{m-1,k}\).
    Hence, \([f]\in \image \Psi\).
    Therefore it follows that \(\Psi\) is surjective.
    Injectivity of \(\Psi\) follows similarly by using the compactness of \(D^{n+1}\).
    Therefore \(\Psi\) is an isomorphism.
  \end{proof}

For the spaces \(\mathcal{R}_{m,k}\) we also have the following stronger result.

\begin{lemma}
  \label{sec:cons-ebins-slice}
  For \(k\geq 1\), \(m+k\geq 2\), the inclusion \(\mathcal{R}_{m,k-1}\hookrightarrow \mathcal{R}_{m,k}\) is a weak homotopy equivalence, i.e. it induces an isomorphism on all homotopy groups.

  Therefore, the inclusion \(\mathcal{R}_{0,1}\hookrightarrow \mathcal{R}\) is a weak homotopy equivalence.
\end{lemma}
\begin{proof}
  As pointed out above, any map \(S^n\rightarrow \mathcal{R}_{m,k}\), \(n\in \mathbb{N}\), can be made transverse to the minimal stratum \(\mathcal{R}_{m,k}-\mathcal{R}_{m,k-1}\).
  Therefore it is homotopic to a map into \(\mathcal{R}_{m,k-1}\).
  Since a similar statement holds for maps defined on discs \(D^{n+1}\), the first claim follows.

  The second claim follows from the first and second claims of Lemma~\ref{sec:cons-ebins-slice-1}.
\end{proof}

We also need to know how paths in \(\mathcal{M}\) can be lifted to paths in \(\mathcal{R}\). That this is possible has been shown in \cite{MR0418147}.
Here we need the following local version of this result.

\begin{lemma}
  Let \(\gamma:I\rightarrow \Phi(U\times S)/G\cong S/\Iso_G(g)\) be a path and \((u_0,s_0)\in U\times S\) with \([\Phi(u_0,s_0)]=\gamma(0)\).

  Then there exists a path \(\gamma':I\rightarrow \Phi(U\times S)\), such that \([\gamma'(t)]=\gamma(t)\) for all \(t\in I\) and \(\gamma'(0)=\Phi(u_0,s_0)\).
\end{lemma}
\begin{proof}
  Since \(\Iso_G(g)\) is a compact Lie group, it follows from \cite[Chapter II.6]{MR0413144}, that there is a path \(\gamma'':I\rightarrow S\) such that \(\gamma''(0)=s_0\) and \([\gamma''(t)]=\gamma(t)\) for all \(t\in I\).
  The path \(\gamma'(t)=\Phi(u_0,\gamma''(t))\) then has the required properties.
\end{proof}

\section{{The proofs of Theorems~\ref{sec:introduction-1} and \ref{sec:introduction-2}}}
\label{sec:proof2}

In this section we prove Theorems~\ref{sec:introduction-1} and \ref{sec:introduction-2}. In these proofs we use the same notation as in the previous section.

The first step in the proof is to understand the maps \(\pi_1(\mathcal{M}_{m,k-1})\rightarrow \pi_1(\mathcal{M}_{m,k})\) induced by the inclusions \(\mathcal{M}_{m,k-1}\hookrightarrow \mathcal{M}_{m,k}\), \(k\geq 1\), \(m+k\geq 2\).
This is the content of the next two lemmas.

\begin{lemma}
  For \(k\geq 1\), \(m+k\geq 2\), the map \(\pi_1(\mathcal{M}_{m,k-1})\rightarrow \pi_1(\mathcal{M}_{m,k})\) is surjective.
\end{lemma}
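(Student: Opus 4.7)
The plan is to show that every based loop $\gamma\colon (S^1,*)\to(\mathcal{M}_{m,k},g_0)$, with $g_0\in\mathcal{M}_{m,k-1}$, is homotopic rel basepoint to a loop in $\mathcal{M}_{m,k-1}$. A metric of $\mathcal{M}_{m,k}\setminus\mathcal{M}_{m,k-1}$ has isometry group of the ``maximal'' type $(m,k)$: $\dim\Iso(g)=m$ and $|\Iso(g)/\Iso(g)^0|=k$. The strategy is to push $\gamma$ off this locus using the transversality lemma of Section~\ref{sec:trans}, exploiting the infinite-dimensionality of the non-fixed summand of the Ebin slice supplied by Bourguignon's theorem.

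First I would cover the compact image $\gamma(S^1)$ by finitely many Ebin charts $N_1,\dots,N_L$ centered at metrics $g_l$ with local models $D(E_l)/G_l$, $G_l=\Iso(g_l)$. In charts where $G_l$ has type strictly smaller than $(m,k)$, every metric in $N_l$ already lies in $\mathcal{M}_{m,k-1}$ by the nesting of isotropy types, so no perturbation is required. In the ``bad'' charts, where $G_l$ has type exactly $(m,k)$, the decomposition $E_l\cong E_l^{G_l}\times F_l$ with $F_l$ infinite-dimensional identifies the locus to avoid locally with $\{\text{$F_l$-coordinate}=0\}\subset D(E_l)$, and the subset $D(E_l)^{G_l}/G_l=D(E_l)^{G_l}$ descends to this bad stratum in $\mathcal{M}$.

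Next I would subdivide $S^1$ into arcs $[t_{j-1},t_j]$ with each image in a single chart, and perform a small preliminary homotopy so that $\gamma(t_j)\in\mathcal{M}_{m,k-1}$ for every $j$. This is possible because $\mathcal{M}_{m,k-1}$ is open and dense in $\mathcal{M}_{m,k}$, density coming from $F_l\neq 0$ in each bad chart. On each sub-arc contained in a bad chart I would lift $\gamma$ to a path in $D(E_l)$ (using the path-lifting recalled in Section~\ref{sec:ebin}), project to $F_l$, and apply the transversality lemma of Section~\ref{sec:trans} with $n=1$ and a surjection $p\colon F_l\to\R^2$, which exists by the remark after that lemma via a continuous inner product on $F_l$. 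Using the compacta $K_i$ one can keep a collar of $\partial D^1$ inside $F_l\setminus\{0\}$ throughout the homotopy, so combined with $f'(D^1_{1/2})\subset F_l\setminus\{0\}$ the perturbed lift avoids $0$ everywhere, while leaving the endpoints unchanged; this descends to a homotopy of $\gamma|_{[t_{j-1},t_j]}$ into $\mathcal{M}_{m,k-1}$ through $\mathcal{M}_{m,k}$.

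The main obstacle is to concatenate these local perturbations into a single global homotopy rel basepoint without ever leaving the Ebin charts in which we work. Because the transversality lemma fixes $\partial D^n$ pointwise and permits the perturbation neighborhood $U$ to be chosen arbitrarily small, both concerns can be handled sub-arc by sub-arc: the homotopies glue along the shared fixed endpoints (which stay in $\mathcal{M}_{m,k-1}$), stay inside the chosen charts, and together yield the required homotopy of $\gamma$ into $\mathcal{M}_{m,k-1}$, establishing surjectivity of $\pi_1(\mathcal{M}_{m,k-1})\to\pi_1(\mathcal{M}_{m,k})$.
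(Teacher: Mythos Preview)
Your proposal is correct and follows essentially the same line as the paper's proof: cover the portion of the loop meeting the bad stratum by Ebin charts, lift each sub-arc to the slice, and use the transversality lemma to push the lift off the minimal stratum $E^{G}\times\{0\}$, then project back down. The paper is terser---it just says ``lift to an Ebin slice and make it transversal to the minimal stratum''---while you spell out the endpoint bookkeeping and the gluing of the local homotopies, but there is no real difference in strategy.
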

\begin{proof}
  Let \(\gamma: I\rightarrow \mathcal{M}_{m,k}\) with \(\gamma(0)=\gamma(1)\in \mathcal{M}_{m,k-1}\).

Then there are finitely many intervals \([a_j,b_j]\subset ]0,1[\), so that \(\gamma([a_j,b_j])\) is contained in an Ebin chart and \(\gamma^{-1}(\mathcal{M}_{m,k}-\mathcal{M}_{m,k-1})\subset \bigcup_j [a_j,b_j]\).

The path \(\gamma|_{[a_j,b_j]}\) can be lifted to a path in an Ebin slice \(D(E)\). There it can be made transversal to the minimal stratum.
Since the complement of the minimal stratum in an Ebin chart is connected, \(\gamma\) is homotopic to a closed curve in \(\mathcal{M}_{m,k-1}\).
\end{proof}

\begin{lemma}
  For \(k\geq 1\), \(m+k\geq 2\), the kernel of the map \(\pi_1(\mathcal{M}_{m,k-1})\rightarrow \pi_1(\mathcal{M}_{m,k})\) is generated by torsion elements.
\end{lemma}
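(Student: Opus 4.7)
The plan is to null-homotope $\gamma$ in $\mathcal{M}_{m,k}$, localize the bad locus into finitely many Ebin-chart disks, and show that each resulting small boundary loop is a torsion element of order dividing $k$ in $\pi_1(\mathcal{M}_{m,k-1})$. Writing $\gamma$ as a product of conjugates of such loops will then show that $\gamma$ lies in the subgroup of $\pi_1(\mathcal{M}_{m,k-1})$ generated by torsion elements.

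Concretely, I choose a continuous map $f\colon D^2 \to \mathcal{M}_{m,k}$ with $f|_{\partial D^2} = \gamma$. Since $\mathcal{M}_{m,k-1} \subset \mathcal{M}_{m,k}$ is open, the set $f^{-1}(\mathcal{M}_{m,k}\setminus \mathcal{M}_{m,k-1})$ is compact in $\interior(D^2)$, so it can be covered by finitely many pairwise disjoint open sub-disks $D_1,\dots,D_r$, each with $f(\overline{D_j})$ contained in a single Ebin chart $D(E_j)/G_j$ around a metric $g_j$ with $\dim G_j=m$ and $|G_j/G_j^0|=k$, and with $f(\partial D_j)\subset\mathcal{M}_{m,k-1}$. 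The complement $D^2\setminus\bigsqcup_j D_j$ is a genus-zero planar surface with $r+1$ boundary components mapping entirely into $\mathcal{M}_{m,k-1}$, and its free fundamental group expresses $\gamma$ as a product $\prod_j \sigma_j\beta_j\sigma_j^{-1}$ of conjugates of the small boundary loops $\beta_j:=f|_{\partial D_j}$, where the $\sigma_j$ are basepoint-connecting paths in $\mathcal{M}_{m,k-1}$.

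It therefore suffices to show each $\beta_j$ has order dividing $k$ in $\pi_1(\mathcal{M}_{m,k-1})$. The loop $\beta_j$ lies in the open subset $U_j:=(D(E_j)\setminus D(E_j)^{G_j})/G_j$ of $\mathcal{M}_{m,k-1}$, and I claim $\pi_1(U_j)$ is annihilated by $k$. First, since $D(E_j)$ is contractible and $D(E_j)^{G_j}$ has infinite codimension by the Bourguignon result recalled in Section~\ref{sec:ebin}, the complement $X_j:=D(E_j)\setminus D(E_j)^{G_j}$ is simply connected. Second, $X_j/G_j^0$ is simply connected, being the quotient of a simply connected space by the action of a connected compact Lie group. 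Third, $U_j=X_j/G_j$ is the quotient of $X_j/G_j^0$ by the induced action of the finite group $G_j/G_j^0$ of order $k$, so by Armstrong's theorem $\pi_1(U_j)$ is a quotient of $G_j/G_j^0$ and hence has order dividing $k$. This yields $\beta_j^k=1$ in $\pi_1(\mathcal{M}_{m,k-1})$ and completes the argument.

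The main obstacle is the intermediate claim that $X_j/G_j^0$ is simply connected. Because smaller but nontrivial isotropy subgroups of $G_j^0$ can persist on $X_j$ after removing the full fixed set $D(E_j)^{G_j}$, the action of $G_j^0$ on $X_j$ need not be free, so no principal-bundle homotopy exact sequence is available. Instead, one must analyze the orbit-type stratification of the $G_j^0$-action: the principal stratum is open and dense with free $G_j^0$-action and simply connected quotient, while the non-principal strata have infinite codimension and, as in the proof of the preceding lemma, can be avoided by maps from compact surfaces, leaving $\pi_1$ unaffected.
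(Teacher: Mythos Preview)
Your overall strategy---localize the null-homotopy into Ebin-chart disks, write $\gamma$ as a product of conjugates of their boundary loops $\beta_j$, show each $\beta_j$ is torsion---is exactly the paper's. Where you diverge is in the torsion step, and your route is more fragile than necessary.

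Your conclusion that $\pi_1(U_j)$ has order dividing $k$ is correct, but both justifications you offer have issues. Armstrong's theorem is stated for locally compact spaces, which $X_j/G_j^0$ is not, and your stratification fix for the simple connectivity of $X_j/G_j^0$ requires knowing that \emph{every} non-principal $G_j^0$-stratum in $D(E_j)$ has infinite codimension, whereas the paper (via Bourguignon) only establishes this for the single minimal stratum $D(E_j)^{G_j}$. The paper bypasses both problems by lifting one level higher, to the slice $D(E_j)$ itself rather than to $X_j/G_j^0$: lift $\beta_j$ to a path $\beta_j'$ in $D(E_j)\setminus D(E_j)^{G_j}$ (path-lifting for compact Lie group quotients is recorded in Section~\ref{sec:ebin}), write $\beta_j'(1)=g\cdot\beta_j'(0)$ for some $g\in G_j$, let $r$ be the order of $g$ in $G_j/G_j^0$, and concatenate $\beta_j'\ast g\beta_j'\ast\cdots\ast g^{r-1}\beta_j'$; this ends in the $G_j^0$-orbit of $\beta_j'(0)$, and since $G_j^0$ is connected one closes up within the orbit to obtain a loop in the simply connected space $D(E_j)\setminus D(E_j)^{G_j}$, whence $\beta_j^r=1$. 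This single path-lifting argument simultaneously handles what you split into your second and third steps, and it shows that your ``main obstacle'' dissolves once you lift past $G_j^0$ altogether. A smaller point: covering the bad locus by \emph{pairwise disjoint} chart-disks is not automatic (think of a circle); the paper arranges the decomposition instead by an inductive cutting argument that peels off one disk at a time.
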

\begin{proof}
   Let \(\gamma:I\rightarrow \mathcal{M}_{m,k-1}\) be a closed curve and \(\lambda:I\times I\rightarrow \mathcal{M}_{m,k}\) a null homotopy of \(\gamma\).

Then there are finitely many discs \(D_1^2,\dots,D^2_j\subset I\times I-\partial(I\times I)\) with piecewise \(C^1\)-boundary, such that:

\begin{enumerate}
\item \(\lambda^{-1}(\mathcal{M}_{m,k}-\mathcal{M}_{m,k-1})\subset \bigcup_i (D_i^2-\partial D_i^2)\)
\item \(\lambda(D_i^2)\) is contained in an Ebin chart.
\end{enumerate}

Without loss of generality we may assume, that there is a curve \(\sigma:I\rightarrow I\times I\) from the base point to the boundary of \(D_j^2\) such that \(\sigma(t)\not\in \bigcup D_i^2\) for \(t\neq 1\).

Cutting \(I\times I\) along \(\sigma\) leads to a homotopy from \(\gamma\) to \(\lambda(\sigma*\partial D_j^2*\sigma^{-1})\).

Now we can lift \(\lambda(\partial D_j^2)\) to a curve in an Ebin slice and make it transversal to the minimal stratum.
This leads to a homotopy  \(\lambda':I\times I\rightarrow \mathcal{M}_{m,k}\) from \(\gamma\) to \(\lambda(\sigma*\partial (D_j^2)'*\sigma^{-1})\)
with
\begin{enumerate}
\item \(\lambda'(\partial(I\times I))\subset \mathcal{M}_{m,k-1}\)
\item \(\lambda'^{-1}(\mathcal{M}_{m,k}-\mathcal{M}_{m,k-1})\) is contained in \(j-1\) discs as above. 
\end{enumerate}

By induction we see that \(\gamma\) is homotopic in \(\mathcal{M}_{m,k-1}\) to
\begin{equation*}
  \sigma_1*\partial D_1^2*\sigma_1^{-1}*\sigma_2*\partial D_2^2\dots*\sigma_j^{-1}.
\end{equation*}
Therefore it suffices to show that closed curves \(\delta\) which are contained in an Ebin chart are torsion elements.

To do so, lift \(\delta\) to an Ebin slice.
Let \(\delta'\) be the lift of \(\delta\).
Let \(H\) be the compact Lie group which acts on this slice.
Then there is a \(h\in H\), such that \(h\delta'(0)=\delta'(1)\).
Let \(l\) be the order of the class of \(h\) in \(H/H^0\), where \(H^0\) is the identity component of \(H\).

Then
\begin{equation*}
  \delta'*h\delta'*\dots*h^{l-1}\delta'
\end{equation*}
is a lift of \(\delta^l\), which ends and starts in the same component of \(H\delta'(0)\).
Therefore \(\delta^l\) can be lifted to a closed curve in the Ebin slice.
Since the complement of the minimal stratum in the Ebin slice is contractible, it follows that \(\delta\) is torsion.
\end{proof}

As a consequence of these lemmas we get:

\begin{cor}
  \label{sec:the-proof-theorem-4}
  The kernel of \(H_1(\mathcal{M}_{0,1})\rightarrow H_1(\mathcal{M})\) is torsion.
\end{cor}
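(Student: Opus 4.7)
The plan is to deduce the homological statement from the two preceding lemmas via the abelianization functor, and then to assemble the filtration $\mathcal{M}_{0,1}\subset\mathcal{M}_{0,2}\subset\cdots\subset\mathcal{M}_{1,0}\subset\cdots\subset\mathcal{M}$ by combining compositions with directed colimits.

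First I would pass from the $\pi_1$-statements of the two preceding lemmas to the corresponding $H_1$-statements. Since $H_1$ is the abelianization of $\pi_1$, and since any subgroup of an abelian group generated by torsion elements is itself a torsion subgroup, the two lemmas together imply that for every $m,k$ the induced map $H_1(\mathcal{M}_{m,k-1})\to H_1(\mathcal{M}_{m,k})$ is surjective with torsion kernel.

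Next I would propagate this property along the above filtration by two routine facts about abelian groups. The composition of two surjections with torsion kernels is again a surjection with torsion kernel, because an extension of a torsion group by a torsion group is torsion. Moreover, if $A_1\to A_2\to\cdots$ is a sequence of surjections with torsion kernels, then the canonical map $A_1\to\lim_k A_k$ to the directed colimit is surjective with torsion kernel, since its kernel is the ascending union $\bigcup_k\ker(A_1\to A_k)$ of torsion subgroups. The identifications $H_1(\mathcal{M}_{m,0})=\lim_k H_1(\mathcal{M}_{m-1,k})$ and $H_1(\mathcal{M})=\lim_m H_1(\mathcal{M}_{m,0})$ that are needed to apply the colimit fact are provided by item \ref{item:3} of the previous section. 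Iterating along the filtration then yields that $H_1(\mathcal{M}_{0,1})\to H_1(\mathcal{M})$ is itself surjective with torsion kernel, which is stronger than what is claimed.

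I do not expect a serious obstacle here, as the real work has already been done in the two preceding lemmas. The only care required is the bookkeeping of surjectivity and of the torsion-kernel property under both finite compositions and directed colimits, together with the verification in the first step that the abelianization functor preserves these properties.
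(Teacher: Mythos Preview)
Your proposal is correct and follows essentially the same route as the paper: abelianize the two $\pi_1$-lemmas to get that each $H_1(\mathcal{M}_{m,k-1})\to H_1(\mathcal{M}_{m,k})$ is surjective with torsion kernel, and then climb the filtration to $\mathcal{M}$ via compositions and directed colimits using the limit identifications of item~\ref{item:3}. The only cosmetic difference is that the paper phrases the colimit step through the kernel identities \ref{item:1}, \ref{item:2}, \ref{item:3}, whereas you invoke the colimit identifications directly and note that the kernel is an ascending union of torsion groups---these are equivalent; one small point worth making explicit in your write-up is that the identification $\ker(f^{ab})=\mathrm{image\ of\ }\ker(f)$ in $G^{ab}$ uses the \emph{surjectivity} of $f$ from the first lemma, which is why ``the two lemmas together'' are needed for the torsion-kernel claim.
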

\begin{proof}
  It follows from the two lemmas above that the kernels of the maps  \(H_1(\mathcal{M}_{m,k-1})\rightarrow H_1(\mathcal{M}_{m,k})\) are torsion.

  To see this note that \(H_1(\mathcal{M}_{m,k})=\pi_1(\mathcal{M}_{m,k})/C_{m,k}\), where \(C_{m,k}\) denotes the commutator subgroup.

 Let \(\gamma\in \pi_1(\mathcal{M}_{m,k-1})\) such that \([\gamma]=0\in H_1(\mathcal{M}_{m,k})\).
 We have to show that \(\gamma\) is contained in a normal subgroup of \(\pi_1(\mathcal{M}_{m,k-1})\) which is generated by torsion elements and \(C_{m,k-1}\).
Since \([\gamma]=0\in H_1(\mathcal{M}_{m,k})\), we have \(\gamma\in C_{m,k}\subset \pi_1(\mathcal{M}_{m,k})\).
Because \(C_{m,k-1}\rightarrow C_{m,k}\) is surjective by the first lemma, it follows from the second lemma that up to torsion elements \(\gamma\) is contained in \(C_{m,k-1}\).
This proves the claim. 

The next step is to show that the kernels of \(H_1(\mathcal{M}_{0,1})\rightarrow H_1(\mathcal{M}_{1,0})\) and \(H_1(\mathcal{M}_{m-1,0})\rightarrow H_1(\mathcal{M}_{m,0})\) are torsion.
This follows from \ref{item:1} of Lemma~\ref{sec:cons-ebins-slice-1}, by taking the limit.

Therefore the statement follows from \ref{item:2} of Lemma~\ref{sec:cons-ebins-slice-1}, by taking the limit.
\end{proof}

By combining Theorem~\ref{sec:the-proof-theorem}, the proof of Theorem~\ref{sec:the-proof-theorem-1}, Lemma \ref{sec:cons-ebins-slice} and Corollary~\ref{sec:the-proof-theorem-4} we now get the following corollary which is Theorem~\ref{sec:introduction-1}.

\begin{cor}
  Let \(n>1\) and \(M\) be a closed spin manifold of dimension \(4n-2\) which is a \(\hat{A}\)-multiplicative fiber in degree \(1\).
Moreover, let \(g_0\in \mathcal{R}^+(M)\).
  Then the image of the map \[\pi_1(\mathcal{R}^+(M),g_0)\rightarrow \pi_1(\mathcal{M}_0^+(M),[g_0])\] contains elements of infinite order.
\end{cor}
\begin{proof}
  Note that \(\mathcal{R}^+(M)\) is an open \(\Diff(M)\)-invariant subset of \(\mathcal{R}(M)\).
  Therefore we can apply the above discussion in the case \(\mathcal{R}=\mathcal{R}^+(M)\).

  We may assume that \(g_0\in \mathcal{R}_{0,1}\).
  Note that, by Lemma~\ref{sec:cons-ebins-slice}, \(\pi_1(\mathcal{R}_{0,1},g_0)\rightarrow \pi_1(\mathcal{R},g_0)\) is an isomorphism.
  Moreover, \(\Diff_0(M)\) acts freely on \(\mathcal{R}_{0,1}\).
  
  Therefore, using Theorem \ref{sec:the-proof-theorem}, one sees as in the proof of Theorem~\ref{sec:the-proof-theorem-1} that there are elements of infinite order in the image of the map \(\pi_1(\mathcal{R}_{0,1},g_0)\rightarrow \pi_1(\mathcal{M}_{0,1},[g_0])\), whose images in \(H_1(\mathcal{M}_{0,1})\) have infinite order.
  
  Hence, the statement follows from Corollary \ref{sec:the-proof-theorem-4}.
\end{proof}

We also have the following theorem, which is Theorem \ref{sec:introduction-2} from the introduction.

\begin{theorem}
Let \(n>2\) be odd and \(0<m_1<m_2<n\) such that \(m_1+m_2=n\).
Moreover, let \(N\) be a closed connected orientable manifold admitting a metric of positive scalar curvature such that
\begin{enumerate}
\item \(H^k(N;\mathbb{Z})=0\) for \(k=m_2-m_1,m_1,m_2,n\),
\item all automorphisms of \(H^*(N;\mathbb{Z})\) are orientation preserving, i.e., induce multiplication with \(1\) in top degree.
\end{enumerate}

  Then, for \(M=S^{m_1}\times S^{m_2}\times S^n\times N\), 
  \[\pi_1(\mathcal{M}_0^+(M),[g_0])\]
  contains elements of infinite order.
  Here \(g_0=h+\epsilon^2 g\) is a product metric with \(h\in \mathcal{R}(S^{m_1}\times S^{m_2}\times S^n)\), \(g\in \mathcal{R}^+(N)\) and \(\epsilon>0\) sufficiently small.
\end{theorem}
\begin{proof}
  Let \(\varphi:S^{m_1}\times S^{m_2}\rightarrow S^n\) be a degree-one map and \(A:S^n\rightarrow SO(n+1)\), such that the image of \([A]\) under the natural map
  \(\pi_n(SO(n+1))\rightarrow \pi_n(S^n)\cong \mathbb{Z}\) is nontrivial.
  We define a diffeomorphism
  \begin{align*}
    f:S^{m_1}\times S^{m_2}\times S^n&\rightarrow S^{m_1}\times S^{m_2}\times S^n&(x,y,z)&\mapsto (x,y,A(\varphi(x,y))z).
  \end{align*}

  Taking the product with the identity on \(N\) we get a diffeomorphism \(F=f\times \Id_N\) of \(M\).
  The map induced by \(F\) on \[H^n(M;\mathbb{Z})\cong H^n(S^{m_1}\times S^{m_2};\mathbb{Z})\oplus H^n(S^n;\mathbb{Z})\]
  is given by
  \begin{align*}
    F^*(\theta_{S^n})&=\theta_{S^n}+a\theta_{S^{m_1}\times S^{m_2}}&F^*(\theta_{S^{m_1}\times S^{m_2}})=\theta_{S^{m_1}\times S^{m_2}},
  \end{align*}
  with \(a\in \mathbb{Z}-\{0\}\).
  Here for an oriented manifold \(M'\), \(\theta_{M'}\) denotes the orientation class of \(M'\).

  We claim that the class of \(F^*\) has infinite order in the abelization of \(\aut_0(H^*(M;\mathbb{Z}))\), the group of orientation preserving automorphisms of \(H^*(M;\mathbb{Z})\).
  To see this note that, by the K\"unneth formula, for \(k=m_1,m_2,n\) we have \(H^k(M;\mathbb{Z})\cong H^k(S^{m_1}\times S^{m_2}\times S^n;\mathbb{Z})\).
  Therefore, every automorphism of \(H^*(M;\mathbb{Z})\) restricts to an automorphism of \(H^*(S^{m_1}\times S^{m_2}\times S^n;\mathbb{Z})\). Furthermore,
  for every orientation-preserving \(\psi\in\aut_0(H^*(M;\mathbb{Z}))\) there is a \(\psi'\in\aut(H^*(N;\mathbb{Z}))\) such that the following diagram commutes.
  \begin{equation*}
    \xymatrix{H^*(M;\mathbb{Z})\ar^{\psi}[r] \ar[d]&H^*(M;\mathbb{Z})\ar[d]\\
      H^*(N;\mathbb{Z})\ar^{\psi'}[r]&H^*(N;\mathbb{Z})}
  \end{equation*}
  Here the vertical maps are induced by the inclusions of the factor \(N\) in \(M\).

  Moreover, we must have \[\psi(\theta_{S^{m_1}\times S^{m_2}})\psi(\theta_{S^n})=\psi(\theta_{S^{m_1}\times S^{m_2}\times S^n})=\theta_{S^{m_1}\times S^{m_2}}\theta_{S^n},\]
  because \(\psi\) and \(\psi'\) are orientation preserving and \(\theta_{M}=\theta_{S^{m_1}\times S^{m_2}\times S^n}\theta_{N}\).

  Therefore it follows that the restriction of \(\psi\) to degree \(n\) is of the form
  \begin{align*}
    \psi(\theta_{S^n})&=\delta(\psi)\theta_{S^n}+a(\psi)\theta_{S^{m_1}\times S^{m_2}}&\psi(\theta_{S^{m_1}\times S^{m_2}})=\delta(\psi)\theta_{S^{m_1}\times S^{m_2}},
  \end{align*}
  with \(\delta(\psi)\in \{\pm 1\}\) and \(a(\psi)\in \mathbb{Z}\).

  Now it is easy to see that the map \(\psi\mapsto \delta(\psi)^{-1}a(\psi)\) defines a group homomorphism \(\aut_0(H^*(M;\mathbb{Z}))\rightarrow \mathbb{Z}\).
  Since \(F^*\) has non-trivial image under this homomorphism it follows that the class of \(F^*\) has infinite order in the abelization of  \(\aut_0(H^*(M;\mathbb{Z}))\).
  In particular, the class of \(F\) has infinite order in the abelization of \(\Diff_0(M)/\Diff_0(M)^0\).

  The next step is to look at the action of \(F\) on the space of metrics of positive scalar curvature on \(M\).

  Since \(F\) has product form, it maps product metrics to product metrics.
  Because for a product metric \(g_0'=h+\epsilon^2g\) we have
  \[\scal_{g_0'}=\scal_h+\frac{1}{\epsilon^2}\scal_g,\]
  the space of product metrics of positive scalar curvature which restrict to a metric of the form  \(\epsilon^2g\) on \(N\) is path connected, if \(g\) has positive scalar curvature.

  Therefore, \(F\) maps the component \(\mathcal{R}'\) of \(g_0\) in \(\mathcal{R}=\mathcal{R}^+(M)\) to itself.
  Since the maximal stratum \(\mathcal{R}_{0,1}\) is \(\Diff(M)\)-invariant, \(F\) also maps \(\mathcal{R}_{0,1}'=\mathcal{R}'\cap \mathcal{R}_{0,1}\) to itself.
  Note that \(\mathcal{R}_{0,1}'\subset \mathcal{R}'\) is open and dense.

  Let \(g_0'\in\mathcal{R}_{0,1}'\) be close to \(g_0\).
  Then there is an isomorphism \(\pi_1(\mathcal{M},[g_0'])\cong\pi_1(\mathcal{M},[g_0])\).
  Moreover, there is an exact sequence 
  \begin{equation*}
    \pi_1(\mathcal{M}_{0,1},[g_0'])\rightarrow \pi_0(\Diff_0(M))\rightarrow \pi_0(\mathcal{R}_{0,1}).
  \end{equation*}

  Since the above map \(\pi_1(\mathcal{M}_{0,1},[g_0'])\rightarrow \pi_0(\Diff_0(M))\) is a group homomorphism, by the above remarks, \([F]\in \Diff_0(M)/\Diff_0(M)^0=\pi_0(\Diff_0(M))\) gives rise to an element \(\alpha\) of infinite order in \(\pi_1(\mathcal{M}_{0,1},[g_0'])\).
  Moreover, the class of \(\alpha\) in \(H_1(\mathcal{M}_{0,1};\mathbb{Z})\) also has  infinite order.
  Hence, the claim follows from Corollary~\ref{sec:the-proof-theorem-4}.
\end{proof}

\small
\bibliography{moduli_non_equi}{}
\bibliographystyle{alpha}

\normalsize
~\\
Michael Wiemeler\\
Mathematisches Institut, Universit\"at M\"unster\\Einsteinstra\ss{}e 62, D-48149 M\"unster, Germany\\
\texttt{wiemelerm@uni-muenster.de}

\end{document}